\theoremstyle{definition}
\newtheorem{Theorem}{Theorem}
\newtheorem*{Theorem*}{Theorem 1}
\newtheorem*{MTheorem*}{Main Theorem}
\newtheorem{Prop}[Theorem]{Proposition}
\newtheorem{Lemma}[Theorem]{Lemma}
\newtheorem{Cor}[Theorem]{Corollary}
\newtheorem*{Cor*}{Corollary}
\theoremstyle{definition}
\newtheorem{Remark}[Theorem]{Remark}
\numberwithin{equation}{section}
\newcommand{\gapit}{\noalign{\smallskip}}
\DeclareMathOperator{\Div}{div}
\newcommand{\mb}{\mathbb}
\newcommand{\ra}{\rightarrow}
\renewcommand{\l}{\ell}
\newcommand{\Z}{\mb{Z}}
\newcommand{\E}{\mathcal{E}}
\newcommand{\F}{\mb{F}}
\newcommand{\Fp}{\mathbb{F}_p} 
\newcommand{\leg}[2]{\left(\frac{#1}{#2}\right)}
\newcommand{\Q}{\mb{Q}}
\newcommand{\C}{\mb{C}}
\newcommand{\<}{\langle}
\renewcommand{\>}{\rangle}
\renewcommand{\S}{\mathcal{S}}
\begin{document}


\baselineskip=17pt


\title[Class Numbers via 3-Isogenies]{Class Numbers via 3-Isogenies\\ and Elliptic Surfaces}

\author[C. McLeman]{Cam McLeman}
\address{University of Michigan - Flint\\ Mathematics Department\\
Flint, MI 48439.}
\email{mclemanc@umflint.edu}

\author[D. Moody]{Dustin Moody}
\address{National Institute of Standards and Technology (NIST) \\ 100 Bureau Drive\\
Gaithersburg, MD, 20899}
\email{dbmoody25@gmail.com}

\date{}

\begin{abstract}
We show that a character sum attached to a family of 3-isogenies
defined on the fibers of a certain elliptic surface over $\Fp$ relates
to the class number of the quadratic imaginary number field
$\Q(\sqrt{-p})$.  In this sense, this provides a higher-dimensional
analog of some recent class number formulas associated to 2-isogenies
of elliptic curves.

\end{abstract}

\subjclass[2010]{Primary 11T24; Secondary 14J27, 11R29}

\keywords{elliptic curve, elliptic surface, class number, character sum}

\maketitle

\section{Introduction}
From the days of Diophantus, elliptic curves have long attracted the
interest of mathematicians.  More recently, elliptic curves have found
applications in diverse areas such as the proof of Fermat's last
theorem, factoring large integers, and in cryptography.  An area worth
particularly mentioning is the study of various character sums on the
points of an elliptic curve, which have been used to sum primes, find
generators for elliptic curve groups, and determine the structure of
such groups by demonstrating the uniformity of the distribution of
certain points (see, for example, \cite{AS}, \cite{BFGS}, \cite{JM},
\cite{KS}, \cite{LS}, \cite{PV}, \cite{Shp}, or \cite{Wil}).  A new
direction in this area has been to examine integer-weighted character
sums over elliptic curves \cite{MR11}, \cite{MR12}.  In this vein we
recall two results, whose interplay motivates the main result of this
paper.

First, to certain 2-isogenies $\tau$ of elliptic curves defined over
$\F_p$, McLeman and Rasmussen attach an integer-valued character sum
$S_\tau$, which is shown to be divisible by $p$ (see \cite{MR11}).  The
subsequent analysis of the quotient ${S_\tau}/{p}$ turns out to be of
arithmetic significance, providing a new class number formula
strikingly similar to a classical result of Dirichlet's.  Namely, we
have
\begin{align}\label{MRsum}
-\tfrac{1}{p}\,S_{\tau}=h_p^*,
\end{align}
where 
\[
h_p^*=\begin{cases}
h(\Q(\sqrt{-p}))&\text{ if }p\equiv 3\pmod{4}\\
0&\text{ otherwise.}
\end{cases}
\]
Here $p$ is a prime and $h(\Q(\sqrt{-p}))$ denotes the class number of $\Q(\sqrt{-p})$.

A second family of results from \cite{MR12} computes the mod-$p$
value of a much larger class of analogous character sums attached to
isogenies of larger degree, and specifically, finds several new
classes of character sums which are also evenly divisible by $p$.  In
light of the above class number formula, it seems of interest to
determine the analogous quotients.  The current article addresses the
analysis of such quotients, focusing on one particular family of
3-isogeny sums satisfying precisely this divisibility condition.  We
show that when this family of character sums is viewed as a single character
sum over an elliptic \textit{surface}, these quotients also compute
class numbers of quadratic imaginary number fields.  In this sense,
this can be viewed as a higher-dimensional analog of \eqref{MRsum}.

\textbf{Statement of Results --}
We begin with some notation.  Let $p$ and $\l$ be primes, with $p\equiv 1\pmod{\l}$.  Let $\tau\colon
E\to E'$ be an $\l$-isogeny of elliptic curves defined over the finite
field $\F_p$.   Let
$\zeta=\zeta_\ell$ denote a fixed primitive complex $\l$-th root of
unity, and choose a point $Q\in E'(\F_p)-\tau(E(\F_p))$.  We will see in Corollary \ref{cor2} that the particular choice of $Q$ does not matter.  From the
isomorphism $E'(\F_p)/\tau(E(\F_p))\cong \Z/\l\Z$, for each $P\in
E'(\F_p)$ we have $P-kQ\in \tau(E(\F_p))$ for a unique $0\leq k\leq
\l-1$.  We define the \textit{character $\chi_\tau$ attached to $\tau$}
by:
\[
\chi_\tau(P) = \zeta^k,\qquad\text{where }P-kQ\in \tau(E(\F_p)).
\]
In particular, $\chi_\tau(P)=1$ if and only if $P$ is in the image of
$\tau$.  

We adopt the following notation for lifting from $\F_p$ to $\Z$: for
$a\in\F_p$, let $\{a\}$ denote the unique integer $0\leq\{a\}\leq p-1$
such that $\{a\}\text{ mod }p=a$.  For the remainder of this paper we
will use $E(\Fp)$ to denote the set of affine $\Fp$-rational points on
the curve.  That is, we do not include the point $\infty$ in $E(\Fp)$.
Finally, given a point $P \in E(\Fp)$, let $x(P)$ be the
$x$-coordinate of $P$.

\bigskip

Notation in hand, we turn to more precise formulations of the two
aforementioned motivating results.  The first is the definition of the
character sum $S_\tau$ appearing in \eqref{MRsum}.  Let $\tau\colon
E\ra E'$ be a 2-isogeny defined over $\F_p$, and let $\chi_\tau$ be
the character attached to $\tau$ described above.  To such a $\tau$,
we introduce the integer-valued character sum

\begin{align}\label{sumdef}
S_{\tau}:=\sum\limits_{P\in E'(\F_p)}\{x(P)\}\chi_\tau(P).
\end{align}

For a concrete example of a situation in which \eqref{MRsum} holds, one can
take (\cite{MR11}, Proposition 2) $E$ and $E'$ to both be the curve
$y^2=(x+2)(x^2-2)$ over the finite field $\F_p$ for any prime $p>3$ of
good and ordinary reduction, and $\tau$ a degree-2 endomorphism of $E$
arising from complex multiplication on $E$ (by $\Z[\sqrt{-2}]$).  If we let $p=131$, for example, then it can be checked that $S_\tau=-655$, and indeed $655/131=5$ is the class number of $\Q (\sqrt{-131}).$

The second motivating result concerns a second class of isogenies
$\tau$ which give character sums divisible by $p$, but for which we do
\textit{not} know the analogous value of $S_{\tau}/p$.  To wit, let
$p\equiv 1\pmod{3}$, and let $E_d/\F_p$ be the elliptic curve
given by $y^2=x^3+d$. If we set $d'=-27d\in\F_p$, the function

\begin{align*}
\tau_d(x,y) := \left(\frac{y^2+3d}{x^2},\frac{y(x^3-8d)}{x^3}\right)
\end{align*}

\noindent defines a 3-isogeny from $E_d$ to $E_{d'}$.  Let us further
suppose that $\left(\frac{d}{p}\right)=1$ to isolate the interesting
cases of the character sum\footnote{If $d$ is a non-square mod $p$, 
  then $E_d$ has a $\F_p$-rational subgroup of order 3 but no
  $\F_p$-rational 3-torsion point.  The isogeny $\tau$ corresponding
  to this subgroup is now surjective, and the character $\chi_\tau$
  degenerates to the trivial character.}.  Then we have the following divisibility result:

\begin{Theorem}\label{divisibility}
Let $p$ and $\tau_d\colon E_d\to E_{d'}$ be as above, with character $\chi_{\tau_d}$.  Then
\begin{align*}
S_{\tau_d}:=\sum_{P\in E_{d'}(\F_p)}\{x(P)\}\chi_{\tau_d}(P)
\end{align*}
is an integer divisible by $p$.
\end{Theorem}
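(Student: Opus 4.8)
The plan is to express $S_{\tau_d}$ as an explicit sum over $\F_p$ using the curve equations, reduce it to a Jacobi/cubic-character sum, and show the sum vanishes modulo $p$ by a symmetry argument. First I would translate the definition of $\chi_{\tau_d}$ into a multiplicative condition. Since $E_{d'}(\F_p)/\tau_d(E_d(\F_p)) \cong \Z/3\Z$ and $\tau_d$ has kernel generated by the rational $3$-torsion point of $E_d$, a point $P = (x,y) \in E_{d'}(\F_p)$ lies in the image of $\tau_d$ precisely when it is in the image of the dual isogeny's complement; concretely one shows $\chi_{\tau_d}(P)$ is a fixed power of the cubic residue symbol $\leg{f(x,y)}{p}_3$ for an explicit rational function $f$ coming from the factorization of $\tau_d$ (the standard computation: $P$ is in the image of a $3$-isogeny iff a certain coordinate is a cube). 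Substituting this into $S_{\tau_d} = \sum_{P} \{x(P)\} \chi_{\tau_d}(P)$ and summing $y$ over the (at most two) points above each $x$, I would rewrite the sum as
\[
S_{\tau_d} = \sum_{x \in \F_p} \{x\}\bigl(1 + \leg{x^3 + d'}{p}\bigr)\omega(x),
\]
where $\omega(x)$ records the cubic-character data, and then separate the $\{x\}$ (the integer lift) from the genuinely mod-$p$ pieces.

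The heart of the argument is the divisibility by $p$. Here I would exploit that everything except the lift $\{x\}$ is a function of $x \bmod p$: writing $\{x\} \equiv x \pmod p$ inside $\Z$, it suffices to show the mod-$p$ reduction $\sum_{x} x\, g(x)$ vanishes in $\F_p$, where $g(x) = (1 + \leg{x^3+d'}{p})\omega(x)$ is $\F_p$-valued (interpreting the cubic symbol via a fixed character $\F_p^\times \to \bmu_3 \subset \ol{\F_p}^\times$, or more cleanly working in $\Z[\zeta_3]$ and reducing modulo a prime above $p$). The key point is a change of variables $x \mapsto \gamma x$ for $\gamma$ a generator of $\F_p^\times$, or better $x \mapsto \zeta x$ where $\zeta \in \F_p$ has order $3$ (available since $p \equiv 1 \pmod 3$): under $x \mapsto \zeta x$ one has $x^3 + d'$ fixed, so $\leg{x^3+d'}{p}$ is invariant, while $\omega$ transforms by a definite cubic-root-of-unity factor, and $x$ scales by $\zeta$. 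If $\omega(\zeta x) = \zeta^{-1}\omega(x)$ (which I expect from the homogeneity of $f$), then $\sum_x x\,g(x)$ is multiplied by $\zeta \cdot \zeta^{-1} = 1$ — that gives no information — so instead the relevant substitution is the one that scales $x$ without a compensating character twist, forcing $\sum_x x\, g(x) = 0$. I would identify precisely which substitution in the group $\F_p^\times \rtimes \Gal$ does this; the cleanest is likely $x \mapsto \zeta x$ combined with the observation that $\sum_x x g(x)$ and $\sum_x \zeta x g(\zeta^{-1}\zeta x)$ must be equal, yielding $(1-\zeta)\sum_x x g(x) = 0$ in $\ol{\F_p}$, hence the sum is $0$.

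Finally I would assemble the pieces: the affine points of $E_{d'}$ over $\F_p$ come in the two flavors captured by $1 + \leg{x^3+d'}{p}$, the character $\chi_{\tau_d}$ contributes the cubic factor, and the only obstruction to naive mod-$p$ vanishing — the integer lift $\{x\}$ — is handled because $\{x\} \equiv x$ and the resulting honest $\F_p$-sum is annihilated by $(1-\zeta)$, a nonzero element of $\Z[\zeta_3]/\mf{p}$. This shows $p \mid S_{\tau_d}$ in $\Z[\zeta_3]$; since $S_{\tau_d}$ is a rational integer (the imaginary parts cancel by pairing $P$ with its conjugate-character contributions, i.e. replacing $\zeta$ by $\zeta^{-1}$ permutes the terms), we conclude $p \mid S_{\tau_d}$ in $\Z$.

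\textbf{Main obstacle.} The delicate step is pinning down the exact form of $\omega(x)$ — i.e. showing that membership of $P=(x,y)$ in $\tau_d(E_d(\F_p))$ is governed by the cubic residuosity of a specific homogeneous rational function of $(x,y)$, and tracking how that function transforms under $x \mapsto \zeta x$. Getting the homogeneity weight right is what makes the $(1-\zeta)$-annihilation argument work rather than collapse to a tautology, so this is where the real content of the divisibility lies.
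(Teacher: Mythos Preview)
Your proposal correctly identifies the strategy --- rewrite $\chi_{\tau_d}$ as a cubic residue symbol and then exploit a symmetry of the resulting sum --- but the step you flag as the ``main obstacle'' is exactly where the proof lives, and your speculation about it goes the wrong way. The paper computes, via the Tate pairing attached to $\widehat{\tau_d}$, that for $P=(x,y)\in E_{d'}(\F_p)$ one has
\[
\chi_{\tau_d}(P)=\left(\frac{y-3\sqrt{-3d}}{p}\right)_3
\]
(away from the cyclic group generated by $T=(0,3\sqrt{-3d})$, whose points have $x=0$ and so contribute nothing to $S_{\tau_d}$). The key feature is that this depends on $y$ alone, not on $x$. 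Consequently your $\omega$ satisfies $\omega(\zeta x)=\omega(x)$, not $\omega(\zeta x)=\zeta^{-1}\omega(x)$ as you guessed: the substitution $x\mapsto\zeta x$ \emph{does} scale $\sum_x x\,g(x)$ by $\zeta$ with no compensating twist, and the $(1-\zeta)$-annihilation you wanted is immediate rather than a tautology.

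With the explicit formula in hand, the paper takes the slightly cleaner route of grouping by $y$ rather than by $x$:
\[
S_{\tau_d}=\sum_{y=0}^{p-1}\left(\frac{y-3\sqrt{-3d}}{p}\right)_3\sum_{\substack{x=0\\x^3\equiv y^2+27d}}^{p-1}x,
\]
and observes that the inner sum is the sum of the lifts of the mod-$p$ cube roots of a fixed element, hence $\equiv 0\pmod p$ since the $x^2$-coefficient of $x^3-(y^2+27d)$ vanishes. This Vieta observation and your $x\mapsto\zeta x$ symmetry are literally the same fact; what your proposal is genuinely missing is the Tate-pairing computation that makes the character depend only on $y$ and hence makes either version go through. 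Your argument for integrality (complex conjugation pairs terms) is fine.
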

This theorem can be established using the techniques in \cite{MR12}.
We include a proof in this work, as it is a stepping stone to prove
our main result.  Worth mentioning is the following corollary of
Theorem \ref{divisibility}.
\begin{Cor}
\label{cor2}
Since $S_{\tau_d}\in\Z$, we note that $S_{\tau_d}$ is independent of the
choice of the point $Q\in E_{d'}(\F_p)-\tau_d(E_{d}(\F_p))$ used to define $\chi_{\tau_d}$.
\end{Cor}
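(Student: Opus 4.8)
The plan is to compare the character sums arising from two different admissible base points, show they are related by complex conjugation, and then use the integrality supplied by Theorem~\ref{divisibility} to force them to coincide.

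First I would fix two points $Q, Q' \in E_{d'}(\F_p) - \tau_d(E_d(\F_p))$ and write $\chi_Q, \chi_{Q'}$ for the two resulting incarnations of $\chi_{\tau_d}$, and $S_Q, S_{Q'}$ for the corresponding values of the sum in Theorem~\ref{divisibility}. The image $\tau_d(E_d(\F_p))$ itself does not depend on the base point; only the labelling of the two nontrivial cosets of $\tau_d(E_d(\F_p))$ in $E_{d'}(\F_p)$ by the powers $\zeta, \zeta^2$ does. Since $E_{d'}(\F_p)/\tau_d(E_d(\F_p)) \cong \Z/3\Z$ and both $Q$ and $Q'$ reduce to generators, there is an $m \in \{1,2\}$ with $Q' - mQ \in \tau_d(E_d(\F_p))$.

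Next I would track how the character changes. If $P - kQ' \in \tau_d(E_d(\F_p))$, then adding $k(Q' - mQ) \in \tau_d(E_d(\F_p))$ gives $P - kmQ \in \tau_d(E_d(\F_p))$, so $\chi_{Q'}(P) = \zeta^k$ while $\chi_Q(P) = \zeta^{km}$; that is, $\chi_{Q'}(P) = \chi_Q(P)^{m^{-1}}$ with the exponent read modulo $3$. Because $1^{-1} \equiv 1$ and $2^{-1} \equiv 2 \pmod 3$, this says either $\chi_{Q'} = \chi_Q$ (when $m = 1$) or $\chi_{Q'} = \chi_Q^2 = \overline{\chi_Q}$ (when $m = 2$), the last equality holding since squaring a cube root of unity coincides with conjugating it.

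Finally, since each weight $\{x(P)\}$ is a rational integer and hence fixed by complex conjugation, in the case $m = 2$ we get
\[
S_{Q'} = \sum_{P \in E_{d'}(\F_p)} \{x(P)\}\, \overline{\chi_Q(P)}
= \overline{\sum_{P \in E_{d'}(\F_p)} \{x(P)\}\, \chi_Q(P)} = \overline{S_Q},
\]
while for $m = 1$ we have $S_{Q'} = S_Q$ directly. Theorem~\ref{divisibility} applies verbatim to $\chi_Q$ (its statement permits any admissible base point), so $S_Q \in \Z \subseteq \R$ and therefore $\overline{S_Q} = S_Q$; in either case $S_{Q'} = S_Q$. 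I do not expect a genuine obstacle here --- the argument is a short piece of bookkeeping --- but the one point meriting care is the legitimacy of feeding Theorem~\ref{divisibility} a \emph{variable} $Q$: one must be sure its proof uses no property of a particular base point, which is exactly what licenses using it to collapse all choices to a single value.
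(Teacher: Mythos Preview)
Your argument is correct and is exactly the reasoning the paper leaves implicit: the corollary is stated without a separate proof, its entire justification being the clause ``Since $S_{\tau_d}\in\Z$'' together with the standard observation (which you have spelled out) that a different admissible $Q$ can at worst replace $\chi_{\tau_d}$ by its complex conjugate. Your self-flagged concern about circularity is not an issue here, since the paper's proof of Theorem~\ref{divisibility} evaluates $\chi_{\tau_d}$ via the Tate-pairing formula of Theorem~\ref{ChiFormula} rather than through any particular $Q$; the only residual ambiguity is the choice of isomorphism in Remark~\ref{ConjugateRemark}, which again amounts only to conjugation.
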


Unlike the sum in \eqref{MRsum}, however, there does not seem to be a
direct relationship between the individuals sums $S_{\tau_d}$ and any
relevant class number.  Instead, we will see that class numbers arise
as a \textit{sum} of quotients $S_{\tau_d}/p$ as $d$ runs over the set
of all quadratic residues mod $p$.  In fact, the process of summing
over all such $d$ permits a concise reformulation in terms of elliptic
surfaces, which we describe now.  We begin with the substitution
$d=z^2$, so that running over all $z\in\F_p^\times$ is equivalent to
running over all square $d$ twice.  After the substitution, we arrive
at the algebraic surface
\[
\widetilde{\mathcal{E}}/\F_p\colon\quad y^2=x^3+z^2.
\]
More specifically, $\widetilde{\mathcal{E}}$ is an elliptic
surface, as the projection $\pi:\E\ra \mathbb{A}^1_z$ from $\E$ on to
the affine $z$-line equips the surface with a natural elliptic
fibration, with a single singular fiber over $z=0$.  Let
$\mathcal{E}=\widetilde{\mathcal{E}}-E_0$ denote the complement of the
singular fiber.  The above isogenies $\tau_d=\tau_{z^2}$ can now be
interpreted as maps from one fiber of $\E$ to another, namely from the
fiber over $z$ to the fiber over $-27z$.  We patch these
fiberwise-defined isogenies together to give a global endomorphism
$\tau\colon\E\to\E$ which respects the fibration in the sense that
$\pi(P_1)=\pi(P_2)$ implies $\pi(\tau(P_1))=\pi(\tau(P_2))$ for points
$P_1,P_2\in \mathcal{E}(\F_p)$.  We simply set
\begin{align*}
\tau(x,y,z)=\left(\frac{y^2+3z^2}{x^2},\frac{y(x^3-8z^2)}{x^3},-27z\right).
\end{align*}

We also extend the character $\chi_{\tau_d}$, defined \textit{a priori}
on each fiber to a global function on $\E$: For $P=(x,y,z)\in\E$, we have
$(x,y)\in E_{z^2}$, and hence it is sensible to write
\begin{align*}
\chi_\tau(P)=\chi_{\tau_{z^2}}(x,y).
\end{align*}

\noindent Note that, as before, $\chi_\tau(P)=1$ if and only if $P$ is
in the image of $\tau$.  Finally, we construct the higher-dimensional
character sum.  Define
\begin{align*}
\S_{\tau}=\sum_{P\in \mathcal{E}}\{x(P)\}\chi_\tau(P).
\end{align*}
  
\noindent The principal result of this work is the following theorem.

\begin{MTheorem*}\label{quotient}
Let $p\equiv 1\pmod{3}$, and let $\E$, $\tau$, $\chi_\tau$,
and $\S_{\tau}$ be as above.  Then
\[
\tfrac{1}{p}\,\S_{\tau}=h_p^*-\tfrac{p-1}{2}.
\]
\end{MTheorem*}

Our technique for calculating the sum $\S_{\tau}$ defined on $\E$ is
essentially a division of labor between two approaches. We will
independently sum ``vertically'' (over fibers) and ``horizontally''
(over sections) over our surface.  Section 2 deals with the former,
analyzing the fiber-wise isogenies $\tau_d$ defined in the
Introduction, and we address the global calculation of $\S_{\tau}$
in Section 3.

\section{Fiberwise-sums and the Tate pairing}

We maintain the notation established in the introduction.  Namely, we
have $p\equiv 1\pmod{3}$, a value $d\in\F_p^*$ with $\leg{d}{p}=1$,
and the 3-isogeny of $\F_p$-curves $\tau_d:E_d\to E_{d'}$ with
$d'=-27d$.  Note that the conditions on $p$ and $d$ imply
\[
\leg{d}{p}=\leg{-3}{p}=\leg{-3d}{p}=1.
\]

\noindent We begin with an analysis of the contribution to $\S_{\tau}$ coming from a
given fiber.  Set
\[
S_{\tau_d}:=\sum_{P\in E_{d'}(\F_p)}\{x(P)\}\chi_{\tau_d}(P),
\]
so that by viewing $\S_{\tau}$ as a sum over fibers, we see
\[\S_{\tau}=2\sum\limits_{\substack{d=0\\\leg{d}{p}=1}}^{p-1}S_{\tau_d}.\]

As in \cite{MR11}, the first step in evaluating $S_{\tau_d}$ is to use
the Tate pairing to provide explicit formulas for the computation of
$\chi_{\tau_d}$.  Let us briefly recall the construction of the
(complex-valued) Tate pairing attached to $\tau_d$, and its connection
to the character $\chi_{\tau_d}$.  Let $\widehat{\tau_d}$ be the dual
isogeny to $\tau_d$ and consider the point $T=(0,3\sqrt{-3d})$, which
generates of the group $E_{d'}[\widehat{\tau_d}](\F_p)$.  One begins
by finding a pair of functions $f_T$ and $g_T$ such that
$\Div(f_T)=3[T]-3[\infty]$ and $f_T\circ \tau_d=g_T^3$.  The Tate
pairing
\begin{equation*}
\psi_{\tau_d} \colon \frac{ E_{d'}(\F_p) }{ \tau(E_d(\F_p)) } \times
E_{d'}[\widehat{\tau_d}](\F_p) \longrightarrow \mu_3(\C),
\end{equation*}
is the (bilinear and non-degenerate) pairing given by the cubic
residue symbol
\begin{equation*}
\psi_{\tau_d}([P],kT)=\left(\frac{\psi'_{\tau_d}([P],kT)}{p}\right)_3,
\end{equation*}
where $[P]$ denotes the image of $P$ in the quotient
$E_{d'}(\F_p)/\tau_d(E_d(\F_p))$, and after choosing an arbitrary point $R\in E_{d'}(\F_p)-\<T\>$, we set
\begin{equation}
\label{tate}
\psi'_{\tau_d}( [P], kT) := \left\{ \begin{array}{ccl}
f(P)^k & & [P] \notin \<[T]\>  \\
\gapit
\left( \frac{ f(P + R) }{f(R)} \right)^k & & [P] \in \<[T]\>.
\end{array} \right.
\end{equation}
We use $\mu_3(\C)$ to denote the set of cubic roots of unity in $\C$.

\begin{Remark}
\label{ConjugateRemark}
The definition of the Tate pairing in \eqref{tate} actually outputs a
value in $\Fp^* /(\Fp^*)^3$, so we compose this version of the Tate
pairing with an isomorphism $\Fp^* /(\Fp^*)^3\cong \mu_3(\C)$.  In the
proof of Theorem \ref{ChiFormula}, we will choose the isomorphism
which forces the Tate pairing to coincide with our character
$\chi_{\tau_d}$.  
\end{Remark}

\begin{Prop}
Letting $E_d$, $E_{d'}$, $\tau_d$, and $T$ as above, we can take
\begin{align*}
f_{T}=y-3\sqrt{-3d}\qquad\text{ and }\qquad g_T=\frac{y-\sqrt{-3d}}{x}
\end{align*}
in the definition of the Tate pairing.
\end{Prop}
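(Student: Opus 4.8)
The plan is to verify directly the two defining properties of the pair $(f_T,g_T)$ demanded in the construction of the Tate pairing: that $\Div(f_T)=3[T]-3[\infty]$ as divisors on $E_{d'}$, and that $f_T\circ\tau_d=g_T^{3}$ as rational functions on $E_d$. Neither step uses anything beyond the explicit Weierstrass models $y^2=x^3+d$ and $y^2=x^3+d'$ and the explicit formula for $\tau_d$, so the argument is purely computational.

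For the divisor, set $c=3\sqrt{-3d}$ and note that $c^{2}=9(-3d)=-27d=d'$, so $T=(0,c)$ does lie on $E_{d'}$ (and, as recorded earlier, it generates $E_{d'}[\widehat{\tau_d}](\F_p)$). On $E_{d'}$ the coordinate function $y$ has a pole of exact order $3$ at $\infty$ and no other pole, hence so does $f_T=y-c$, so the divisor of zeros of $f_T$ has degree $3$. I would then rewrite the Weierstrass relation as
\[
f_T\cdot(y+c)=y^{2}-c^{2}=x^{3},
\]
observe that $y+c=2c\neq 0$ at $T$ and that $x$ (not $y-c$) is a local uniformizer at $T$ because the tangent line to $E_{d'}$ at $T$ is horizontal, and conclude $\ord_T(f_T)=\ord_T(x^{3})=3$. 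Thus all three zeros are concentrated at $T$ and $\Div(f_T)=3[T]-3[\infty]$.

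For the functional equation, I would substitute the formula for $\tau_d$ into $f_T$ to obtain
\[
f_T\circ\tau_d=\frac{y(x^{3}-8d)}{x^{3}}-3\sqrt{-3d},
\]
and compare it with $g_T^{3}=(y-\sqrt{-3d})^{3}/x^{3}$. Clearing the common denominator $x^{3}$ reduces the claim to the polynomial identity
\[
y(x^{3}-8d)-3\sqrt{-3d}\,x^{3}=(y-\sqrt{-3d})^{3},
\]
which one checks after using $x^{3}=y^{2}-d$ on the left and expanding the cube with $(\sqrt{-3d})^{2}=-3d$ on the right: both sides collapse to $y^{3}-3\sqrt{-3d}\,y^{2}-9dy+3d\sqrt{-3d}$. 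As a consistency check one can also note that $\tau_d^{*}\Div(f_T)=3\,\Div(g_T)$: solving $\tau_d(x,y)=T$ forces $y^{2}=-3d$ and then $y=\sqrt{-3d}$, so $\tau_d^{-1}(T)$ consists exactly of the three zeros of $y-\sqrt{-3d}$ on $E_d$, while $\ker(\tau_d)=\tau_d^{-1}(\infty)$ is supported on $\infty$ together with the two points having $x=0$, which are precisely the poles of $g_T$.

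The one point that calls for any care — and the only place a sign or a multiplicity could go wrong — is the order-of-vanishing computation in the divisor step: because the tangent to $E_{d'}$ at $T=(0,c)$ is horizontal rather than vertical, $y-c$ vanishes there to order $3$ (equivalently, $T$ is a flex, hence a point of order $3$), and getting this multiplicity right is what pins down $\Div(f_T)$. Everything else is routine bookkeeping, provided one fixes a single square root $\sqrt{-3d}$ at the outset and uses it consistently in $T$, in $f_T$, and in $g_T$.
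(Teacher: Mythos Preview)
Your proposal is correct and follows essentially the same approach as the paper: verify $\Div(f_T)=3[T]-3[\infty]$ directly and then check $f_T\circ\tau_d=g_T^3$ by substituting the explicit isogeny formula and using the curve relation $x^3=y^2-d$. The paper treats the divisor step tersely (``easily check''), so your expanded argument via the factorization $f_T\,(y+c)=x^3$ and the uniformizer at $T$ simply fills in detail the paper omits; the functional-equation verification is identical in substance.
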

\begin{proof}
We easily check that $f_{T}$ is of degree 3 and vanishes only at $T$,
so $\Div(f_{T})=3[T]-3[\infty]$. Now we need only to verify that as
functions on $E_{d'}$, $f_{T}\circ\tau_d$ is the cube of $g_{T}$.  For a
point $P=(x,y)\in E_1(\F_p)$ (i.e., satisfying $x^3=y^2-d$), we have
\begin{align*}
f\circ\tau(P)&=\frac{y(x^3-8d)}{x^3}-3\sqrt{-3d}\\
&=\frac{y(y^2-9d)-3\sqrt{-3d}(y^2-d)}{x^3}\\
&=\left(\frac{y-\sqrt{-3d}}{x}\right)^3,
\end{align*}
as desired.
\end{proof}

\begin{Theorem}\label{ChiFormula}
With $E_d$, $\tau_d$, and $T$ as above, we have the following explicit
formulas for the character $\chi_{\tau_d}$:
\begin{equation}
\chi_{\tau_d}(P) = \psi_{\tau_d}([P], T) = \left\{
\begin{array}{ccl}
\bigl(\frac{-4d}{p}\bigr)^k_3 & & \text{if }[P] = [kT], \, k=0,1,2,\\
\gapit
\bigl( \tfrac{y-3\sqrt{-3d}}{p} \bigr)_3 & & \text{otherwise.}
\end{array}
\right.
\end{equation}
Note in particular that $\chi_{\tau_d}(P)=1$ if and only if
$P\in\tau_d(E_1(\F_p))$.
\end{Theorem}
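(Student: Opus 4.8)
The plan is to first identify $\chi_{\tau_d}$ with the one‑variable Tate pairing $P\mapsto\psi_{\tau_d}([P],T)$, and then to read the explicit formulas straight off the defining expression~\eqref{tate}, using the functions $f_T=y-3\sqrt{-3d}$ and $g_T=(y-\sqrt{-3d})/x$ from the preceding Proposition. For the identification, observe that since $T$ generates $E_{d'}[\widehat{\tau_d}](\F_p)\cong\Z/3\Z$ and $\psi_{\tau_d}$ is bilinear and non‑degenerate, the map $P\mapsto\psi_{\tau_d}([P],T)$ is a nontrivial homomorphism $E_{d'}(\F_p)/\tau_d(E_d(\F_p))\to\mu_3(\C)$; as both groups have order $3$ it is an isomorphism, with kernel exactly $\tau_d(E_d(\F_p))$. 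By construction $\chi_{\tau_d}$ is another such isomorphism, and any two faithful characters of a cyclic group of order $3$ differ at most by complex conjugation. Hence $\psi_{\tau_d}(\cdot,T)$ equals $\chi_{\tau_d}$ or $\overline{\chi_{\tau_d}}$, and — exactly as anticipated in Remark~\ref{ConjugateRemark} — we normalize the isomorphism $\F_p^*/(\F_p^*)^3\cong\mu_3(\C)$ used in the Tate pairing so that $\psi_{\tau_d}(\cdot,T)=\chi_{\tau_d}$. This also yields the final sentence of the theorem, since then $\chi_{\tau_d}(P)=1$ iff $[P]$ lies in the kernel iff $P\in\tau_d(E_d(\F_p))$.

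With the identification in hand, we compute $\psi_{\tau_d}([P],T)=\left(\frac{\psi'_{\tau_d}([P],T)}{p}\right)_3$ from~\eqref{tate} with $f=f_T$. For $P$ outside $\langle T\rangle$ we are in the generic branch, so $\psi'_{\tau_d}([P],T)=f_T(P)=y(P)-3\sqrt{-3d}$, which is the ``otherwise'' clause. For the remaining points $P=kT$ it suffices, by bilinearity, to evaluate the diagonal value $\psi_{\tau_d}([T],T)$ and raise it to the $k$‑th power. Here one uses the translated formula $\psi'_{\tau_d}([T],T)=f_T(T+R)/f_T(R)$ for an auxiliary point $R=(r,s)\in E_{d'}(\F_p)\setminus\langle T\rangle$: expanding $T+R$ via the group law on $y^2=x^3+d'$ and simplifying using $s^2=r^3+d'$ and $(3\sqrt{-3d})^2=d'$, one checks that the numerator collapses to $-4d'(s-3\sqrt{-3d})$ while the denominator is $s-3\sqrt{-3d}$, so $\psi'_{\tau_d}([T],T)=-4d'/r^3\equiv -4d'\pmod{(\F_p^*)^3}$. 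Since $d'=-27d=(-3)^3d$, this is congruent to $-4d$, whence $\chi_{\tau_d}(kT)=\left(\frac{-4d}{p}\right)_3^{k}$, completing the formula.

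The crux — and essentially the only genuine computation — is this diagonal value: one has to work with the degenerate branch of~\eqref{tate} honestly and carry out the algebra above, the key point being that the auxiliary point $R$ enters only through the cube $r^3$, which disappears modulo cubes, so the value is well defined and equals $\left(\frac{-4d}{p}\right)_3$ independently of $R$. Everything else — bilinearity, the kernel computation in the identification step, and the choice‑independence of the generic formula — is formal.
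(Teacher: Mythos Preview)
Your argument is correct. The identification $\chi_{\tau_d}=\psi_{\tau_d}(\cdot,T)$ via non-degeneracy plus the normalization of Remark~\ref{ConjugateRemark}, and the generic branch $f_T(P)=y-3\sqrt{-3d}$, match the paper exactly. Where you diverge is in the evaluation of the diagonal value $\psi_{\tau_d}([T],T)$: you compute $f_T(T+R)/f_T(R)$ head-on via the addition law on $E_{d'}$, arriving at $-4d'/r^3\equiv -4d\pmod{(\F_p^*)^3}$. The paper instead avoids the group-law computation entirely by showing that $T\in\tau_d(E_d(\F_p))$ if and only if $-4d$ is a cube: if $\delta^3=-4d$ then $\tau_d(\delta,\sqrt{-3d})=T$, and conversely any preimage $(x,y)$ of $T$ forces $y^2=-3d$ and hence $x^3=-4d$. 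Your route buys an honest computation of the pairing value itself (not merely the condition for it to equal $1$), at the price of the addition-law algebra; the paper's preimage trick is quicker but, taken literally, only pins down when $\chi_{\tau_d}(T)=1$ and leans on the already-spent normalization freedom for the remaining case. One minor presentational point: your sentence about ``the numerator collapses to $-4d'(s-3\sqrt{-3d})$ while the denominator is $s-3\sqrt{-3d}$'' reads ambiguously; what you have is $f_T(T+R)=-4d'(s-3\sqrt{-3d})/r^3$, and dividing by $f_T(R)=s-3\sqrt{-3d}$ yields the stated $-4d'/r^3$.
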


\begin{proof}
Let us abbreviate $\tau=\tau_d$ for the duration of the proof.  For
the statement that $\chi_\tau(\cdot)=\psi_{\tau}([\cdot],T)$, we first
show that a point $P\in E_{d'}(\F_p)$ is in the image of $\tau$ if and
only if $\psi_{\tau}([P],T)=1$. By bilinearity, $\psi_{\tau}([P], kT)
= \psi_{\tau}([P], T)^k$. As~$E_2[\hat{\tau}]$ is generated by $T$,
$P$ pairs trivially with $T$ if and only if it pairs trivially with
every element of $E_2[\hat{\tau}]$. By the left non-degeneracy of
$\psi_\tau$, this occurs if and only if $[P]$ is the trivial class of
$E_2(\F_p)/\tau(E_1(\F_p))$, i.e., $P$ is in the image of $\tau$.
This shows that $\chi_{\tau}(\cdot)=\psi_{\tau}([\cdot],P)$ or
$\chi_{\tau}(\cdot)={\psi}^{-1}_{\tau}([.],P)$.  As in Remark
\ref{ConjugateRemark}, we now choose the correct isomorphism to
achieve equality.

We proceed to the second equality in the statement of the theorem.
Since the bottom case is the definition of the Tate pairing for such
points (given the calculation of $f_T$ from the previous proposition),
we only need to address the top case.  For this it suffices to show that
$\chi_\tau(T)=1$ if and only if $\left(\frac{-4d}{p}\right)_3=1$.  Let
$\alpha$ be a square root of $-3d$ in $\F_p$, such that $T=(0,3\alpha)$.  
Suppose first $\left(\frac{-4d}{p}\right)_3=1$, so that we have some $\delta \in \Fp$ with $\delta^3=-4d$.  Then 
\begin{align*} 
\tau(\delta,\alpha)&=\left(\frac{\alpha^2+3d}{\delta^2},\frac{\alpha(\delta^3-8d)}{\delta^3}  \right)\\
&=\left(0,\frac{-12d\alpha}{-4d} \right) \\
&=T,
\end{align*}
which shows that $\chi_\tau(T)=1$.

For the converse, we assume there exists $x,y \in \Fp$, with $\tau(x,y)=(0,3\alpha)$.  This requires that
$\frac{y^2+3d}{x^2}=0$ and $\frac{y(x^3-8d)}{x^3}=3\alpha.$  From the first of these equations we see that $y^2=-3d$.  As the point $(x,y)$ is on the curve $y^2=x^3+d$, it follows that $x^3=-4d$, so $\left(\frac{-4d}{p}\right)_3=1$.
\end{proof}

\noindent With these explicit formulas for $\chi_{\tau_d}$ in hand, we now prove
Theorem \ref{divisibility}.
\begin{Theorem*} \label{div2}
Let $p,E_{d}$, $E_{d'}$, and $\tau_d$ be as above.  Then
\begin{align*}
S_{\tau_d}:=\sum_{P\in E_{d'}(\F_p)}\{x(P)\}\chi_{\tau_d}(P)
\end{align*}
is an integer divisible by $p$.
\end{Theorem*}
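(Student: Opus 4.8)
The plan is to show that $S_{\tau_d}$ is an integer by computing it in $\F_p$ and separately establishing that the fractional parts $\{x(P)\}$ can be replaced by an expression that makes the divisibility by $p$ transparent. First I would record the two facts we need about $\chi_{\tau_d}$: it is a cubic-root-of-unity-valued function, and by Theorem~\ref{ChiFormula} it equals $1$ exactly on the image of $\tau_d$. In particular, $\overline{\chi_{\tau_d}(P)} = \chi_{\tau_d}(P)^2 = \chi_{\tau_d}(-P)$ is \emph{not} automatic, but what is true is that the image of $\tau_d$ is a subgroup, $P \mapsto -P$ preserves it, and more generally $\chi_{\tau_d}(-P) = \chi_{\tau_d}(P)^{-1} = \overline{\chi_{\tau_d}(P)}$ because negation acts as inversion on the quotient $E_{d'}(\F_p)/\tau_d(E_d(\F_p)) \cong \Z/3\Z$. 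Since $x(-P) = x(P)$, pairing the terms for $P$ and $-P$ shows that $S_{\tau_d}$ is a sum of terms of the form $\{x(P)\}(\chi_{\tau_d}(P) + \overline{\chi_{\tau_d}(P)})$, which is real (indeed, $2\{x(P)\}$ when $\chi_{\tau_d}(P)=1$ and $-\{x(P)\}$ when $\chi_{\tau_d}(P)$ is a primitive cube root of unity); the $2$-torsion points, if any, contribute real terms by themselves. Hence $S_{\tau_d} \in \Z$, and this justifies Corollary~\ref{cor2}.

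For the divisibility by $p$, the key step is to pass from $\{x(P)\}$ to $x(P)$ as an element of $\F_p$. Working modulo $p$, $\{x(P)\} \equiv x(P)$, so
\[
S_{\tau_d} \equiv \sum_{P \in E_{d'}(\F_p)} x(P)\,\chi_{\tau_d}(P) \pmod{p},
\]
where the right-hand side is now a genuine element of the subring $\Z[\zeta_3]$ reduced mod a prime above $p$ — here I use that $p \equiv 1 \pmod 3$ splits, so $\zeta_3 \in \F_p$ and the reduction makes sense. Thus it suffices to prove that $\sum_{P \in E_{d'}(\F_p)} x(P)\,\chi_{\tau_d}(P) = 0$ in $\F_p$. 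I would split the sum according to the value of $k$ with $P - kQ \in \tau_d(E_d(\F_p))$, i.e. according to the coset of $P$. Writing $P = \tau_d(R) + kQ$ as $R$ ranges over $E_d(\F_p)$ (together with the contribution of the kernel, handled by multiplicity) and $k \in \{0,1,2\}$, the weight becomes $\chi_{\tau_d}(P) = \zeta_3^k$, so the sum is $\sum_{k=0}^{2} \zeta_3^k \big(\sum_{R} x(\tau_d(R) + kQ)\big)$. The inner sum over $R$ is a sum of $x$-coordinates over a full coset; the crucial claim is that each such coset-sum $\sum_{P \in \text{coset}} x(P)$ is the \emph{same} element of $\F_p$, independent of the coset — equivalently, it is one-third of $\sum_{P \in E_{d'}(\F_p)} x(P)$ — so that the total is $\big(\sum_{k=0}^2 \zeta_3^k\big)\cdot(\text{common value}) = 0$.

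The main obstacle is establishing that coset-independence of the sum of $x$-coordinates. The slick way is to use the full $3$-torsion or translation structure: translation by $Q$ is a bijection $E_{d'}(\F_p) \to E_{d'}(\F_p)$ permuting the three cosets cyclically, but $x(P + Q) \neq x(P)$ in general, so one cannot argue term-by-term. Instead I would argue via symmetric functions: for fixed $R \in E_{d'}(\F_p)$, the three points $R, R+Q, R+2Q$ (when $Q$ is a $3$-torsion point, which we may arrange since the kernel of $\widehat{\tau_d}$ supplies a rational $3$-torsion point not in the image of $\tau_d$) have $x$-coordinates that are the three roots of a cubic polynomial over $\F_p$ whose coefficients are rational functions of $x(R)$ — the classical $3$-division-polynomial / addition-formula identity. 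Summing the elementary symmetric function "$e_1$" of these three roots over a set of coset representatives $R$, and using that this $e_1$ is itself expressible through the group law, gives the needed equality; alternatively, one observes that $\sum_{P} x(P)\chi_{\tau_d}(P)$ is, up to the isomorphism of Remark~\ref{ConjugateRemark}, a Jacobi-type sum whose vanishing mod $p$ follows from the fact that $\chi_{\tau_d}$ is a nontrivial character on the quotient. I expect the cleanest route is the latter: identify $\sum_{P} x(P)\chi_{\tau_d}(P)$ with a combination of character sums over $\F_p$ of the form $\sum_{x} x \cdot (\text{cubic residue character of a cubic in }x)$, and invoke the standard vanishing/evaluation of such sums — this is precisely the computation carried out in \cite{MR12}, which we are reproving here for completeness. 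The one genuinely delicate point to get right is the bookkeeping of the singular/exceptional points (the kernel of $\tau_d$, the point $T$, and any $2$-torsion) and the multiplicity-two nature of the map $R \mapsto \tau_d(R)$, but these contribute only finitely many explicit terms that one checks directly reduce to $0$ mod $p$.
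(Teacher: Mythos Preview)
Your integrality argument in the first paragraph is fine. The real gap is in the divisibility step. You correctly reduce to showing that the three coset-sums $A_k := \sum_{P \in C_k}\{x(P)\}$ agree in $\F_p$; indeed, since your conjugation argument gives $A_1 = A_2$ as integers (negation swaps $C_1$ and $C_2$ and preserves $x$), one has $S_{\tau_d} = A_0 + A_1(\zeta+\zeta^2) = A_0 - A_1$, so $p\mid S_{\tau_d}$ is literally the statement $A_0\equiv A_1\pmod p$. But that is the theorem restated, and you do not prove it: both suggested routes are left as sketches. The first route has a concrete obstruction. You assume the rational $3$-torsion point $T=(0,3\sqrt{-3d})$ lies outside $\tau_d(E_d(\F_p))$, yet Theorem~\ref{ChiFormula} gives $\chi_{\tau_d}(T)=\bigl(\tfrac{-4d}{p}\bigr)_3$, which equals $1$ whenever $-4d$ is a cube mod $p$; in that case translation by $T$ fixes each coset rather than permuting them, and your division-polynomial idea says nothing about the cosets at all. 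Even when $T$ is not in the image, knowing that $x(R)+x(R+T)+x(R+2T)$ is a rational function of $x(R)$ only gives access to $A_0+A_1+A_2$ after summing over $R\in C_0$, not to the individual $A_k$.

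The paper's argument bypasses cosets entirely and is much shorter, using precisely the explicit formula from Theorem~\ref{ChiFormula} that you never invoke. For $P=(x,y)\notin\langle T\rangle$ one has $\chi_{\tau_d}(P)=\bigl(\tfrac{y-3\sqrt{-3d}}{p}\bigr)_3$, and the excluded points all have $x$-coordinate $0$, hence contribute nothing to $S_{\tau_d}$. Reversing the order of summation gives
\[
S_{\tau_d}=\sum_{y=0}^{p-1}\Bigl(\tfrac{y-3\sqrt{-3d}}{p}\Bigr)_3\sum_{\substack{x=0\\ x^3\equiv y^2+27d}}^{p-1}x.
\]
For each fixed $y$ the inner sum is the sum of the integer lifts of the cube roots of $y^2+27d$ in $\F_p$; since the $x^2$-coefficient of $x^3-(y^2+27d)$ vanishes, this sum is $\equiv 0\pmod p$. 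Thus every inner sum is an integer divisible by $p$, and the theorem follows immediately.
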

\begin{proof}
Since the values of $\{x(P)\}$ are 0 for $P\in\<T\>$, they contribute
trivially to the sum (no matter the value of $\chi_{\tau_d}(P)$).  This allows us to
avoid breaking the sum into cases based on the results of the Tate
pairing.  We have {\small
\begin{align*}
\sum_{P\in E_{d'}(\F_p)}\{x(P)\}\chi_\tau(P)&=\sum_{P\in E_{d'}(\F_p)}\{x(P)\}\left(\frac{y(P)-3\sqrt{-3d}}{p}\right)_3\\
&=\sum_{y=0}^{p-1}\sum_{\substack{x=0\\x^3\equiv y^2+27d}}^{p-1}x\left(\frac{y-3\sqrt{-3d}}{p}\right)_3\\
&=\sum_{y=0}^{p-1}\left(\frac{y-3\sqrt{-3d}}{p}\right)_3\sum_{\substack{x=0\\x^3\equiv y^2+27d}}^{p-1}x.\\
\end{align*}
} Now each inner summand here is the sum of the lifts of the mod-$p$
cube roots of $y^2+27d$.  This sum is necessarily zero mod $p$ since the
coefficient of $x^2$ in the polynomial $x^3-(y^2+27d)$ is trivial.
Thus the whole sum is divisible by $p$, as desired.
\end{proof}

\section{The Global Sum}

We recall the global setting from the introduction. The surface
$\mathcal{E}$ is the complement of the singular fiber over $z=0$ in
the elliptic surface defined over $\F_p$ by $y^2=x^3+z^2.$ As such,
$\mathcal{E}$ is the union of fibers over $z$ for non-zero $z\in
\F_p$.  For $P=(x,y,z)\in\E$, we have $(x,y)\in E_{z^2}$, and we glue
together the fiber-wise isogenies $\tau_d$ to a global function $\tau$
and global character $\chi_\tau$ by defining
\begin{align*}
\chi_\tau(P)=\chi_{\tau_{z^2}}(x,y).
\end{align*}

The global sum $\S_{\tau}$ from the main theorem now decomposes as (twice)
the sum of the fiberwise-sums addressed in the previous section:
\begin{align*}
\S_{\tau}:=\sum_{P\in \mathcal{E}}\{x(P)\}\chi_\tau(P)=2\sum_{\substack{d\in\F_p\\\left(\frac{d}{p}\right)=1}} S_{\tau_{d}}.
\end{align*}

\noindent As an immediate corollary of Theorem \ref{divisibility}, we have the
divisibility of the global sum.
\begin{Cor}
With notation as previously defined, $p\mid \S_{\tau}$.
\end{Cor}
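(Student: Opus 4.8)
The plan is to obtain this as a purely formal consequence of Theorem~\ref{divisibility}, once the fiberwise decomposition of $\S_\tau$ recorded in the text is justified. First I would check carefully the identity
\[
\S_\tau=\sum_{P\in\mathcal{E}}\{x(P)\}\chi_\tau(P)=2\sum_{\substack{d\in\F_p\\ \left(\frac{d}{p}\right)=1}}S_{\tau_d}.
\]
By construction, the affine points of $\mathcal{E}$ over $\F_p$ form the disjoint union, over $z\in\F_p^\times$, of the sets $E_{z^2}(\F_p)$, the $z$-coordinate distinguishing the fibers. For $P=(x,y,z)$ lying over a fixed $z$ we have $\chi_\tau(P)=\chi_{\tau_{z^2}}(x,y)$ by definition, and $x(P)$ is literally the $x$-coordinate of $(x,y)\in E_{z^2}$, so the points over $z$ contribute exactly $S_{\tau_{z^2}}$ to $\S_\tau$. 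Summing first over each fiber and then over $z$ gives $\S_\tau=\sum_{z\in\F_p^\times}S_{\tau_{z^2}}$. Regrouping by $d=z^2$: as $z$ runs over $\F_p^\times$, the value $d=z^2$ runs over the nonzero quadratic residues modulo $p$, each attained for precisely the two preimages $\pm z$ (distinct since $p$ is odd, which is automatic as $p\equiv 1\pmod 3$); moreover $d=0$ never occurs. This accounts for the outer factor of $2$ and the condition $\left(\frac{d}{p}\right)=1$.

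Granting the decomposition, the corollary is immediate: Theorem~\ref{divisibility} asserts that for every $d$ with $\left(\frac{d}{p}\right)=1$ the quantity $S_{\tau_d}$ is an integer divisible by $p$, so $\S_\tau$ is $2$ times a finite sum of such integers and is therefore itself an integer divisible by $p$. There is no genuine obstacle in the argument; the only point warranting attention is the bookkeeping behind the fiber decomposition — that each affine point of $\mathcal{E}$ is counted exactly once, that the singular fiber over $z=0$ has been excised and hence contributes nothing, and that $z$ and $-z$ are genuinely distinct preimages of each square $d$. This divisibility is exactly what makes the ratio $\tfrac1p\S_\tau$ appearing in the Main Theorem a well-defined integer, so it is a natural prerequisite for the computation carried out in the remainder of Section~3.
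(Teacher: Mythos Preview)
Your proposal is correct and follows exactly the paper's approach: the paper states the corollary as an immediate consequence of Theorem~\ref{divisibility} via the fiberwise decomposition $\S_\tau=2\sum_{(d/p)=1}S_{\tau_d}$, and you simply supply the bookkeeping details behind that decomposition (disjointness of fibers, the two preimages $\pm z$ of each square $d$, exclusion of the singular fiber) that the paper leaves implicit.
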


\noindent We now let $\beta$ denote a fixed square root of $-27$ (mod $p$), and
introduce the characteristic function
\[
e(x,y,z):=\begin{cases}
1&\text{ if }y^2\equiv x^3-27z^2\pmod{p},\\
0&\text{ otherwise}
\end{cases}
\]
on points $(x,y,z)\in\F_p^3$. The explicit computation of the Tate
pairing, using the function $f_T=y-3\sqrt{-3}\,z=y-\beta z$ in the
  fiber $E_{z^2}$ provides the following formula for $\S_{\tau}$.

{\small
\begin{align}\label{dSum}\nonumber
\S_{\tau}&=\sum_{z=1}^{p-1}\sum_{y=0}^{p-1}\sum_{\substack{x=1\\x^3\equiv y^2+27z^2}}^{p-1}x\left(\frac{y-\beta z}{p}\right)_3\\
&=\sum_{z=1}^{p-1}\sum_{y=0}^{p-1}\sum_{x=1}^{p-1}x\left(\frac{y-\beta z}{p}\right)_3e(x,y,z).
\end{align}
}

We re-arrange the orders of summation and use symmetries of the
function $e(x,y,z)$ to simplify the sum.  First, we write the sum as

\[
\sum_{x=1}^{p-1}x\,\sum_{y=0}^{p-1}\sum_{z=1}^{p-1}\left(\frac{y-\beta  z}{p}\right)_3e(x,y,z),
\]
and for a fixed $x$ and $y$ we address the innermost sum 
\[
s(x,y):=\sum_{z=1}^{p-1}\left(\frac{y-\beta  z}{p}\right)_3e(x,y,z).
\]
Note for a fixed $x$ and $y$, there exists a $z$ (with $1\leq z\leq
p-1$) such that $e(x,y,z)=1$ if and only if $\frac{y^2-x^3}{-27}$ is a
square mod $p$.  In this case there are precisely two such $z$'s, which we will denote by $\pm z_0$.  We then have
\begin{align*}
s(x,y)&=\left(\frac{y-\beta z_0}{p}\right)_3+\left(\frac{y+\beta z_0}{p}\right)_3\\
&=
\begin{cases}
1+1=2&\text{ if }\left(\frac{y-\beta z_0}{p}\right)_3=1,\\
\zeta+\zeta^{-1}=-1&\text{ otherwise.}
\end{cases}
\end{align*}
Here we have used that $\left(\frac{y-\beta z_0}{p}\right)_3$ and $\left(\frac{y+\beta z_0}{p}\right)_3$ are multiplicative inverses by the calculation
\[
\left(\frac{y-\beta z_0}{p}\right)_3\left(\frac{y+\beta z_0}{p}\right)_3=\left(\frac{y^2+27z_0^2}{p}\right)_3=\left(\frac{x^3}{p}\right)_3=1.
\]
Note that $z_0=z_0(x,y)$ can be written (only slightly abusively) as
$\sqrt{\frac{y^2-x^3}{-27}}$, and so we have $\beta
z_0=\pm \sqrt{y^2-x^3}$.  To summarize, the innermost sum $s(x,y)$ evaluates as one
of three possible cases depending on $x$ and $y$:
\begin{align}
\label{innersum}
s(x,y)&=
\begin{cases}
0&\text{ if }\left(\frac{y^2-x^3}{p}\right)\neq 1,\\
2&\text{ if }\left(\frac{y^2-x^3}{p}\right)=1\text{ and }\left(\frac{y\pm\sqrt{y^2-x^3}}{p}\right)_3=1,\\
-1&\text{ if }\left(\frac{y^2-x^3}{p}\right)=1\text{ and }\left(\frac{y\pm\sqrt{y^2-x^3}}{p}\right)_3\neq 1.\\
\end{cases}
\end{align}
Recall from equation \eqref{dSum}
\begin{equation} 
\label{finalsum}
\S_{\tau}=\sum_{x=1}^{p-1} x \sum_{y=0}^{p-1} s(x,y).
\end{equation}

\noindent Before we evaluate the new inner-most sum, we need a technical result.
\begin{Lemma}\label{equidistributed}
Let $x \neq 0$ be a fixed element of $\Fp$.  Then
\[
\left\vert \left\{y \in \Fp : \left(\frac{y^2-x^3}{p} \right) =1 \right\} \right\vert=
\begin{cases}
(p-1)/2&\text{ if }\left(\frac{x}{p}\right)=-1,\\
(p-3)/2&\text{ if }\left(\frac{x}{p}\right)=1.\\
\end{cases}\]

\end{Lemma}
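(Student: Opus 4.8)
The plan is to convert the condition $\leg{y^2-x^3}{p}=1$ into a point count. Write $a=x^3$, which is nonzero since $x\neq 0$, let $N_1$ denote the quantity to be computed (the number of $y\in\Fp$ for which $y^2-a$ is a nonzero square), and set $N_0=\#\{y\in\Fp:y^2=a\}$. For a fixed $y$, the number of $t\in\Fp$ with $t^2=y^2-a$ is $0$, $1$, or $2$ according as $y^2-a$ is a nonsquare, is $0$, or is a nonzero square. Summing over all $y$, the total number of pairs $(y,t)\in\Fp^2$ with $y^2-t^2=a$ equals exactly $2N_1+N_0$.

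Next I would count those pairs directly. Since $p$ is odd, the substitution $(y,t)\mapsto(u,v)=(y+t,\,y-t)$ is a bijection of $\Fp^2$ onto itself, under which $y^2-t^2=a$ becomes $uv=a$. As $a\neq 0$, the solutions of $uv=a$ are parametrized by $u\in\Fp^\times$ with $v=a/u$, giving exactly $p-1$ of them. Hence $2N_1+N_0=p-1$.

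Finally I would evaluate $N_0$. The number of square roots of $a=x^3$ in $\Fp$ is $1+\leg{x^3}{p}=1+\leg{x}{p}$, using $\leg{x^3}{p}=\leg{x}{p}^3=\leg{x}{p}$. So $N_0=2$ when $\leg{x}{p}=1$ and $N_0=0$ when $\leg{x}{p}=-1$, and solving $N_1=\tfrac{1}{2}(p-1-N_0)$ yields $N_1=(p-3)/2$ and $N_1=(p-1)/2$ respectively, as claimed.

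This argument is entirely elementary, so there is no substantial obstacle; the only steps that require a word of care are that $(y,t)\mapsto(y+t,y-t)$ is invertible precisely because $2$ is a unit mod $p$, and that the odd exponent in $x^3$ does not change the Legendre symbol. (Alternatively, one can bypass the pair count and invoke the standard evaluation $\sum_{y\in\Fp}\leg{y^2-a}{p}=-1$ for $a\neq 0$ together with the observation that $\tfrac12\bigl(1+\leg{b}{p}\bigr)$ is the indicator of nonzero squares up to an error of $\tfrac12$ at $b=0$; this recovers the same relation $2N_1+N_0=p-1$ and hence the same conclusion.)
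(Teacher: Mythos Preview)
Your proof is correct and is essentially the same argument as the paper's: both count pairs $(y,t)$ with $y^2-t^2=x^3$ via the bijection $(y,t)\leftrightarrow(u,v)=(y+t,y-t)$ to solutions of $uv=x^3$, obtaining $p-1$, and then separate off the contribution from $y^2=x^3$ according to whether $x$ is a square. Your packaging via the relation $2N_1+N_0=p-1$ is a clean way to organize exactly the case split the paper carries out in prose.
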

\begin{proof}
It is easy to see the number of $u,v \in \Fp$ with $uv=x^3$ is $p-1$:
for any non-zero $u$, let $v=x^3/u$.  For each such solution, let
$y=(u+v)/2$ and $c=(u-v)/2$, which is equivalent to $u=y+c$ and
$v=y-c$.  Thus, the number of solutions to $(y+c)(y-c)=y^2-c^2=x^3$ is
also $p-1$.  We may rewrite this equation as $y^2-x^3=c^2$.

Now suppose first $x$ is not a square in $\Fp$.  Then $x^3$ is not a
square, and so there are no values of $y$ such that $y^2-x^3=0$.  For
each non-zero $c$, the values $\pm c$ are distinct and give the same
value for $c^2$.  We see there are $(p-1)/2$ values of $y$ for which
$ \left(\frac{y^2-x^3}{p} \right) =1$.

If instead $x$ is a square in $\Fp$, then so too is $x^3$ and there
will be two values of $y$ for which $y^2-x^3=0$.  This leaves $p-3$
solutions to $y^2-x^3=c^2$, with $c \neq 0$.  Again, as $\pm c$ both
lead to the same value of $c^2$, we find there are thus $(p-3)/2$
values of $y$ for which $ \left(\frac{y^2-x^3}{p} \right) =1$.
\end{proof}

\noindent With Lemma \ref{equidistributed}, we can now easily establish the following lemma.

\begin{Lemma}\label{theonlylemma}
For a fixed $x \neq 0$, 
\[ \sum_{y=0}^{p-1} s(x,y) = -1 -\left(\frac{x}{p}\right).\]
\end{Lemma}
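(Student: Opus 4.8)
The plan is to collapse the inner sum $\sum_{y}s(x,y)$ into a complete cubic-character sum, which vanishes, so that the whole computation reduces to a small boundary term. Write $\chi(\cdot)=\bigl(\tfrac{\cdot}{p}\bigr)_3$ for the cubic residue symbol (with $\chi(0)=0$); as a nontrivial character of the cyclic group $\F_p^\times$ it satisfies $\sum_{u\in\F_p^\times}\chi(u)=0$. By \eqref{innersum}, $s(x,y)=0$ unless $y^2-x^3$ is a nonzero square mod $p$, and the computation preceding \eqref{innersum} shows that for such $y$ one has $s(x,y)=\chi(y-c)+\chi(y+c)$, where $c$ is either square root of $y^2-x^3$ (recall $\beta z_0=\pm\sqrt{y^2-x^3}$, so this two-term expression does not depend on the choice of sign). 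The key algebraic identity is $(y-c)(y+c)=y^2-c^2=x^3$.

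So I would pass to the variable $u=y-c$ (with partner $v=y+c=x^3/u$), i.e.\ study the map $u\mapsto y:=\tfrac12(u+x^3u^{-1})$ from $\F_p^\times$ to $\F_p$, whose fiber over $y$ is the root set of $u^2-2yu+x^3$. That fiber is the pair $\{y-c,\,y+c\}$ exactly when $y^2-x^3$ is a nonzero square, is the singleton $\{y\}$ (with $y^2=x^3$) when $x$ is a square, and is empty otherwise. Lemma \ref{equidistributed} enters here: it certifies that the two-element fibers account for $p-1$ points when $\left(\tfrac{x}{p}\right)=-1$ and for $p-3$ points when $\left(\tfrac{x}{p}\right)=1$, so that these fibers, together with the (zero or two) singleton fibers, genuinely partition $\F_p^\times$. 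Summing $\chi$ over this partition then gives
\begin{align*}
0=\sum_{u\in\F_p^\times}\chi(u)&=\sum_{\substack{y\,:\,y^2-x^3\\ \text{a nonzero square}}}\bigl(\chi(y-c)+\chi(y+c)\bigr)\;+\sum_{y\,:\,y^2=x^3}\chi(y)\\
&=\sum_{y=0}^{p-1}s(x,y)\;+\sum_{y\,:\,y^2=x^3}\chi(y),
\end{align*}
where the last equality again uses \eqref{innersum} (in particular that $s(x,y)=0$ on the non-contributing $y$). Hence $\sum_{y}s(x,y)=-\sum_{y\,:\,y^2=x^3}\chi(y)$.

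Finally I would evaluate the residual sum. If $\left(\tfrac{x}{p}\right)=-1$ then $x^3$ is a nonsquare, the sum is empty, and $\sum_{y}s(x,y)=0=-1-\left(\tfrac{x}{p}\right)$. If $\left(\tfrac{x}{p}\right)=1$ then $x^3$ is a nonzero square with distinct roots $\pm t$; since $\chi(\pm t)$ is a cube root of unity while $\chi(\pm t)^2=\chi(t^2)=\chi(x^3)=\chi(x)^3=1$, we get $\chi(t)=\chi(-t)=1$, so the residual sum is $2$ and $\sum_{y}s(x,y)=-2=-1-\left(\tfrac{x}{p}\right)$. Either way the lemma follows. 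I expect the only delicate point to be the middle paragraph — verifying that $s(x,y)$ really equals $\chi(y-c)+\chi(y+c)$ on the contributing $y$ (the sign ambiguity in $c$ being harmless, as it merely swaps the two summands) and that the quadratic-root fibers partition $\F_p^\times$ with exactly the stated multiplicities, which is precisely the content that Lemma \ref{equidistributed} supplies.
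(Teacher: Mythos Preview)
Your proof is correct and rests on the same underlying idea as the paper's: both exploit the parametrization $u=y-c$, $v=y+c$ with $uv=x^3$, so that as $y$ ranges over the contributing values the pairs $\{y-c,y+c\}$ sweep out $\F_p^\times$ (minus zero or two cube points, according to whether $x$ is a square). The difference is in the final bookkeeping. The paper argues by \emph{counting}: it proves the values $y\pm\sqrt{y^2-x^3}$ are pairwise distinct, invokes Lemma~\ref{equidistributed} for the size of the range, and then tallies how many $y$ land in each cubic residue class to compute $2\cdot\#(\text{cubes})-\#(\text{non-cubes})$. You instead observe directly that $s(x,y)=\chi(y-c)+\chi(y+c)$ on the contributing $y$, so the whole sum collapses to the complete cubic character sum $\sum_{u\in\F_p^\times}\chi(u)=0$ minus the boundary term $\sum_{y^2=x^3}\chi(y)$. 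Your route is slightly slicker---it sidesteps the explicit distinctness argument and the residue-class count---while the paper's version is more hands-on; but the two are really the same proof in different clothing.
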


\begin{proof}
Suppose first that $x$ is not a square.  We can ignore the values of $y$ for which $\left(\frac{y^2-x^3}{p}
\right) \neq 1$, as then $s(x,y)=0$ and they do not contribute to the
overall sum.  So we assume $y^2-x^3$ is a
non-zero square in $\Fp$.  We now claim that as we run over these
values of $y$, the values $y \pm \sqrt{y^2-x^3}$ are distinct.
If this were not the case, then there would exist a $y_1$ and $y_2$ such that
\[y_1 \pm \sqrt{y_1^2-x^3} = y_2 \pm \sqrt{y_2^2-x^3}.\]
Squaring both sides of this equation and simplifying, we see
\[y_1(y_1 \pm \sqrt{y_1^2-x^3}) = y_2(y_2 \pm \sqrt{y_2^2-x^3}) .\] As
$x \neq 0$, then $y_1 \pm \sqrt{y_1^2-x^3} \neq 0$, and similarly for
$y_2$.  By assumption, $y_1 \pm \sqrt{y_1^2-x^3} = y_2 \pm
\sqrt{y_2^2-x^3}$, and as this is non-zero we can divide through by it
to see that $y_1=y_2$.

From Lemma \ref{equidistributed}, there are $(p-1)/2$ values of $y$ with $\left(\frac{y^2-x^3}{p}
\right) = 1$.  Substituting these values into $y \pm
\sqrt{y^2-x^3}$ will result in $p-1$ distinct non-zero values in
$\Fp$.  It follows that there are $(p-1)/6$ values of $y$ such that $\left(\frac{y
  \pm \sqrt{y^2-x^3}}{p} \right)_3=1$ (or $\zeta$ or $\zeta^2$).  So
in this case, using \eqref{innersum} we see that

\[\begin{aligned} 
\sum_{y=0}^{p-1}s(x,y)& = 2\left(\frac{p-1}{6} \right)-\left(\frac{p-1}{3}  \right) \\
&= 0 \\
&=-1-\left(\frac{x}{p} \right). 
\end{aligned}\]

We similarly examine the case when $x$ is a square in $\Fp$.  By Lemma \ref{equidistributed} again, there are $(p-3)/2$ values of $y$ with $\left(\frac{y^2-x^3}{p} \right) = 1$.  These are the only values for which $s(x,y) \neq 0$.  As we run over them, then $y \pm
\sqrt{y^2-x^3}$ will run over $p-3$ distinct non-zero values in $\Fp$.
The two values not obtained are when $y = \pm x\sqrt{x}$, as then
$y^2=x^3$ and so $\left(\frac{y^2-x^3}{p} \right) = 0.$ But note that
then $y \pm \sqrt{y^2-x^3}$ just equals $y$, and $y = (\pm
\sqrt{x})^3$.  In short, the values of $\left(\frac{y \pm
  \sqrt{y^2-x^3}}{p} \right)_3$ will be equidistributed amongst $1$,
$\zeta$, and $\zeta^2$, except for the two values in $\Fp^*$, both
of which have it equaling 1.  So then,
\[
\begin{aligned} \sum_{y=0}^{p-1}s(x,y)& = 2\left(\frac{p-1}{6}-1 \right)-\left(\frac{p-1}{3}  \right) \\
&= -2 \\
&=-1-\left(\frac{x}{p} \right). \end{aligned}
\]
This completes the proof. 
\end{proof}
\noindent Finally, we can complete the proof of the main result.
\begin{Theorem}
We have
\[
\frac{\S_{\tau}}{p} = h_p^*-\tfrac{p-1}{2}.\]
\end{Theorem}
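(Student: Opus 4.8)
The plan is to feed the evaluation of Lemma~\ref{theonlylemma} into \eqref{finalsum} and read off the answer. First I would write
\[
\S_{\tau}\;=\;\sum_{x=1}^{p-1} x\left(-1-\left(\frac{x}{p}\right)\right)\;=\;-\sum_{x=1}^{p-1} x\;-\;\sum_{x=1}^{p-1} x\left(\frac{x}{p}\right),
\]
so that the computation collapses to evaluating these two sums. The first is the elementary arithmetic progression $\sum_{x=1}^{p-1} x=\tfrac{p(p-1)}{2}$, contributing $-\tfrac{p(p-1)}{2}$.

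The second sum is precisely where the arithmetic enters: it is a classical Dirichlet class number sum. When $p\equiv 3\pmod 4$, the Legendre symbol $\left(\frac{\cdot}{p}\right)$ is the odd primitive quadratic character of conductor $p$ (equivalently, the Kronecker character of discriminant $-p$), and the finite closed form for $L(1,\chi)$ at an odd primitive character, combined with the analytic class number formula $h(\Q(\sqrt{-p}))=\tfrac{w\sqrt{p}}{2\pi}L(1,\chi)$ with $w=2$ roots of unity, yields
\[
\sum_{x=1}^{p-1} x\left(\frac{x}{p}\right)=-p\,h(\Q(\sqrt{-p})).
\]
When $p\equiv 1\pmod 4$ the character is even, so the substitution $x\mapsto p-x$ forces $\sum_{x=1}^{p-1} x\left(\frac{x}{p}\right)=0$, which agrees with the convention $h_p^{*}=0$ in that case. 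Since $p\equiv 1\pmod 3$ already rules out $p=2,3$ (so that $w=2$ is the only relevant case), we conclude in all cases that $\sum_{x=1}^{p-1} x\left(\frac{x}{p}\right)=-p\,h_p^{*}$.

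Putting the pieces together gives $\S_{\tau}=-\tfrac{p(p-1)}{2}+p\,h_p^{*}=p\bigl(h_p^{*}-\tfrac{p-1}{2}\bigr)$ (in particular re-proving the divisibility from the earlier corollary), and dividing by $p$ finishes the proof. The only ingredient beyond bookkeeping is the class number formula in the displayed normalization; thus the ``main obstacle'' is really just to cite or re-derive it with the correct sign and constant, and to verify that the two parity classes of $p$ modulo $4$ are handled consistently---both of which are routine.
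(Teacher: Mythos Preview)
Your argument is correct and follows essentially the same route as the paper: plug Lemma~\ref{theonlylemma} into \eqref{finalsum}, split the resulting sum into $-\sum_{x} x$ and $-\sum_{x} x\left(\tfrac{x}{p}\right)$, and invoke Dirichlet's evaluation $\sum_{x=1}^{p-1} x\left(\tfrac{x}{p}\right)=-p\,h_p^{*}$. The only difference is cosmetic: you spell out the two parity cases of $p\bmod 4$ and the normalization in the class number formula, whereas the paper simply cites Dirichlet's result.
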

\begin{proof}
By equation \eqref{finalsum} and Lemma \ref{theonlylemma}, we see that
\[\begin{aligned} \S_{\tau}&=\sum_{x=1}^{p-1} x \left( -1 - \left(\frac{x}{p}\right) \right), \\
&=-\sum_{x=1}^{p-1}x\left(\frac{x}{p}\right) -\sum_{x=1}^{p-1}x, \\
&= ph_p^*-\tfrac{p(p-1)}{2}  . \end{aligned}\]
We have used Dirichlet's result that
\[\sum_{x=1}^{p-1} x\left(\frac{x}{p}\right) = -p h_p^*.\]
This completes the proof.
\end{proof}

\section{Conclusion}
It would be interesting to know whether other families of elliptic
curves or surfaces can be found which yield class number formulas
similar to the results in this paper.  All families of 3-isogenies
that we investigated which provided class number formulas ended up
being isomorphic to the curves $y^2=x^3+d$.  It would also be
interesting to find analogous formulas for curves with isogenies of
degree greater than 3.  As mentioned in the Introduction, some
divisibility properties have been shown in \cite{MR12}, however not
much is known about the corresponding quotients.  Finally, we leave it
as future work to analyze related character sums, where we weight by
other integer valued funtions other than $x(P)$.  Preliminary work
using $y(P)$ has been shown to have relations with class numbers.

\subsection*{Acknowledgements}

The authors would like to thank Christopher Rasmussen for some helpful
suggestions, and acknowledge the contribution of SAGE \cite{SAGE}, which
facilitated the construction of examples which were helpful in
discovering the main theorems of this work.

\end{document}